\newcommand{\cP}{\mathcal{P}}
\newcommand{\cC}{\mathcal{C}}
\newcommand{\TT}{\mathbb{T}}
\newcommand{\End}{\operatorname{End}}
\newcommand{\add}{\operatorname{add}}
\newtheorem*{thm}{Theorem} % 1st argument is your name for it
\newtheorem{thm1}{Theorem}[section]  
\newtheorem{lemma}[thm1]{Lemma}   % 2nd argument is what is printed
\newtheorem{coroll}[thm1]{Corollary}
\newtheorem*{corollary}{Corollary}
\newtheorem{proposition}[thm1]{Proposition}
\theoremstyle{remark}
\newtheorem*{acknowledgements}{Acknowledgements}
\theoremstyle{definition}
\newtheorem{dfn}[thm1]{Definition}
\newtheorem{rem}[thm1]{Remark}
\newtheorem{notation}[thm1]{Notation}
\journal{Journal of Algebra}
\begin{document}
\begin{frontmatter}
\title{Jacobian algebras with periodic module category and exponential growth}
\author{Yadira Valdivieso-D\'iaz}%\thanks{The author was partially supported by a CONICET doctoral fellowship.}
 \ead{valdivieso@mdp.edu.ar}%}
\address{Dean Funes 3350,
Departamento de Matem\'atica,
Facultad de Ciencias Exactas y Naturales,
Universidad Nacional de Mar del Plata,
Argentina}

\begin{abstract}
Recently it was proven by Geiss, Labardini-Fragoso and Sh\"oer in \cite{GLFS13} that every Jacobian algebra associated to a triangulation of
a closed surface $S$ with a collection of marked points $M$ is tame and Ladkani proved in \cite{Lad12} these algebras are (weakly) symmetric. In this work we show that for these algebras the
Auslander-Reiten translation acts 2-periodically on objects. Moreover, we show that excluding only the case of a sphere with $4$ (or less) punctures, these algebras are of exponential growth. These results imply that the existing characterization of symmetric tame algebras whose non-projective indecomposable modules are $\Omega$-periodic, has at least a missing class (see \cite[Theorem 6.2]{ES08} or \cite{ES06}).

As a consequence of the 2-periodical actions of the Auslander-Reiten translation on objects, we have that
the Auslander-Reiten quiver of the generalized cluster category $\cC_{(S,M)}$
consists only of stable tubes of rank $1$ or $2$.
\end{abstract}

\begin{keyword}
Jacobian algebras, symmetric algebras, Auslander-Reiten translation, cluster categories, surfaces with marked points.
\MSC[2010]{16G70, 13F60}
\end{keyword}
\end{frontmatter}

\section{Introduction}
Let $k$ be an algebraically closed field.
A potential $W$ for a quiver $Q$ is, roughly speaking, a linear combination of
cyclic paths in the  complete path algebra $k\langle\langle Q\rangle\rangle$.
The Jacobian algebra $\cP(Q,W)$ associated to a quiver with a potential
$(Q,W)$ is  the quotient of the complete path algebra $k\langle\langle Q\rangle\rangle$ modulo the Jacobian
ideal $J(W)$. Here, $J(W)$ is the topological closure closure of the ideal of $k\langle\langle Q\rangle\rangle$
which is generated by the cyclic derivatives of $W$ with respect to the arrows
of  $Q$.

Quivers with potential were introduced in\cite{DWZ08} in order to construct additive
categorifications of cluster algebras with skew-symmetric exchange
matrix. For the just mentioned categorification it is crucial that
the potential for $Q$ be non-degenerate, i.e. that it can be mutated along
with the quiver arbitrarily, see \cite{DWZ08} for more details on quivers with
potentials.

In \cite{FST08} the authors introduced, under some mild hypothesis, for each oriented
surface with marked points $(S,M)$ a mutation finite cluster algebra with
skew symmetric exchange matrices. More precisely, each triangulation $\TT$ of
$(S,M)$ by tagged
arcs corresponds to a cluster and the corresponding exchange matrix is
conveniently coded into a quiver $Q(\TT)$.
Labardini-Fragoso in \cite{LF09} enhanced this construction by introducing potentials
$W(\TT)$ and showed that these  potentials are compatible with mutations.
In particular, these potentials are non-degenerate. Ladkani showed that for
surfaces with empty boundary and a triangulation $\TT$ which has no
self-folded triangles the Jacobian algebra $\cP(Q(\TT),W(\TT))$ is symmetric 
(and in particular finite-dimensional).
It follows that for any triangulation $\TT'$ of a closed surface $(S,M)$ the
Jacobian algebra $\cP(Q(\TT'), W(\TT'))$ is weakly symmetric by \cite{HI11}.
In \cite{GLFS13} it is shown by a degeneration argument that these algebras are tame.

Next, following Amiot \cite[Sec. 3.4]{Ami12} and
Labardini-Fragoso \cite[Theorem 4.2]{LF13} we have a 2-Calabi-Yau triangulated category $\cC_{(S,M)}$ together with a family of cluster tilting
objects $(T_{\TT})_{\TT \text{ triangulation of } (S,M)}$, related by Iyama-Yoshino
mutations such that $\End_{\cC_{(S,M)}}(T(\TT)) \cong\cP(Q(\TT),W(\TT))^{\text{op}}$ (see for example \cite{Ke08} for details of $n$-Calabi-Yau categories).

\begin{thm}
Let $S$ be a closed oriented surface with a non-empty finite collection $M$ of
punctures, excluding only the case of a sphere with $4$ (or less) punctures.
For an arbitrary tagged triangulation $\TT$, the Jacobian algebra
$\cP(Q(\TT),W(\TT))$ is symmetric, tame, its stable Auslander-Reiten
quiver  consists only of stable tubes of rank $1$ or $2$ and it is an algebra of exponential growth.
\end{thm}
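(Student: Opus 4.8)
The plan is to treat the four assertions in turn. That $\cP:=\cP(Q(\TT),W(\TT))$ is symmetric and tame has been recorded in the introduction (Ladkani together with \cite{HI11}, and \cite{GLFS13}), so the real content lies in the shape of the stable Auslander--Reiten quiver and in the exponential growth. For the first of these I would reduce everything to showing that the Auslander--Reiten translate $\tau$ of $\cP$ is $2$-periodic on objects; in fact I would aim for the stronger statement that $\Omega^{4}\cong\mathrm{id}$ on $\underline{\mathrm{mod}}\,\cP$.

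The mechanism for this periodicity is that the stable category $\underline{\mathrm{mod}}\,\cP$ carries two a priori different Calabi--Yau structures, which must then coincide up to a shift. On the one hand, $\cP\cong\End_{\cC_{(S,M)}}(T(\TT))^{\mathrm{op}}$ is a $2$-Calabi--Yau tilted algebra coming from the \emph{algebraic} $2$-Calabi--Yau category $\cC_{(S,M)}$ (the cluster category of the Ginzburg dg algebra of $(Q(\TT),W(\TT))$, which is Hom-finite because $\cP$ is finite-dimensional); since $\cP$ is self-injective every module is Cohen--Macaulay, so the Keller--Reiten result on $2$-Calabi--Yau tilted algebras gives that $\underline{\mathrm{mod}}\,\cP$ is $3$-Calabi--Yau, i.e. with shift $[1]=\Omega^{-1}$ its Serre functor $\mathbb{S}$ satisfies $\mathbb{S}\cong[3]$. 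On the other hand, for any symmetric algebra one has $\tau\cong\Omega^{2}$, so the Serre functor of $\underline{\mathrm{mod}}\,\cP$ is $\mathbb{S}\cong\tau\circ[1]\cong\Omega=[-1]$. Comparing the two presentations of $\mathbb{S}$ yields $[4]\cong\mathrm{id}$, hence $\Omega^{4}\cong\mathrm{id}$ and $\tau^{2}\cong\Omega^{4}\cong\mathrm{id}$. Since $\cP$ is tame and of infinite representation type (the latter following from the exponential-growth assertion below, or known independently), all $\tau$-orbits being finite rules out stable Auslander--Reiten components of type $\mathbb{Z}A_{\infty}^{\infty}$, $\mathbb{Z}D_{\infty}$ or $\mathbb{Z}\widetilde{\Delta}$; by the Happel--Preiser--Ringel theorem every component is then a stable tube $\mathbb{Z}A_{\infty}/\langle\tau^{n}\rangle$, and $\tau^{2}\cong\mathrm{id}$ forces $n\in\{1,2\}$.

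For the exponential growth I would work with the combinatorial model of $\mathrm{mod}\,\cP$ associated with the surface: the indecomposable $\cP$-modules are parametrized by strings and bands, the strings corresponding to homotopy classes of arcs and the bands to primitive non-contractible closed curves on $(S,M)$ in minimal position with respect to $\TT$, the underlying dimension of a string or band module being the number of crossings with $\TT$. Each band $b$ determines a one-parameter family $\{M(b,\lambda)\}_{\lambda\in k^{*}}$ of pairwise non-isomorphic indecomposables of a single dimension $d(b)$, and inequivalent bands determine inequivalent families; so the number of one-parameter families of $\cP$-modules of dimension $d$ is bounded below by the number of bands $b$ with $d(b)=d$. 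It then remains to show that the latter number is not bounded by any polynomial in $d$. Via the dual graph of the triangulation this becomes a count of reduced primitive cyclic walks that realize genuine closed curves, of a prescribed combinatorial length; for every closed surface with punctures other than the sphere with at most four punctures this count has exponential growth rate, while for the excluded surfaces only polynomially many such curves occur and the algebra is in fact of polynomial growth. Hence $\cP$ is of exponential growth.

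I expect the last step to be the principal obstacle. One has to check, first, that sufficiently many reduced cyclic walks actually yield bands over the Jacobian algebra (the Jacobian relations obstruct some of them), and second, that the resulting family of bands has exponential growth when measured by the crossing number with $\TT$ --- for which one needs a genuine lower bound, not merely the trivial exponential growth of arbitrary walks in the dual graph --- together with the precise statement that the sphere with four punctures, and the smaller degenerate surfaces, are exactly the cases where this breaks down. By contrast, once the two Calabi--Yau structures on $\underline{\mathrm{mod}}\,\cP$ are identified, the $2$-periodicity of $\tau$ and the description of the stable Auslander--Reiten quiver by tubes are essentially formal.
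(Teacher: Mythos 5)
Your treatment of the periodicity is essentially the paper's own argument: you compare the $3$-Calabi--Yau structure of $\underline{\mathrm{mod}}\,\cP$ (you get it from Keller--Reiten on stably Calabi--Yau cluster-tilted algebras; the paper derives it from the $n$-angulated machinery of Geiss--Keller--Oppermann after checking that $\add(T)$ is closed under the Serre functor) with the identity $\tau\cong\Omega^{2}$ for symmetric algebras, and conclude $\Omega^{4}\cong\mathrm{id}$, $\tau^{2}\cong\mathrm{id}$, whence tubes of rank $1$ or $2$ via Happel--Preiser--Ringel. That part is sound and needs no further comment.

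The exponential-growth part has a genuine gap, and it sits exactly where you suspect. Your starting premise --- that the indecomposable $\cP$-modules are parametrized by strings and bands, with bands corresponding to closed curves on $(S,M)$ --- is not available for these algebras: the Jacobian algebra of a triangulation of a \emph{closed} surface is not a string, gentle, or clannish algebra (its relations come from cyclic derivatives of a potential containing both triangle cycles and puncture cycles), and even the tameness of \cite{GLFS13} is obtained by a degeneration argument rather than by such a parametrization. So the count of bands by closed curves, and the subsequent exponential lower bound on curves of given crossing number, would have to be built from scratch, and you correctly flag that you do not know how to do this.

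The paper's route avoids both difficulties and is worth internalizing: since $\mu_{A'}(d)\le\mu_{A}(d)$ for any quotient $A'$ of $A$, it suffices to produce a quotient of $\cP(Q(\TT),W(\TT))$ that \emph{is} a string (or skewed-gentle) algebra and is of exponential growth. For one well-chosen triangulation per surface (Ladkani's triangulations with all punctures of valency $\ge 4$; a separate skewed-gentle triangulation for the sphere with $5$ punctures) one kills the relations $\alpha f(\alpha)$ coming from the triangles, obtains a string algebra, and exhibits two explicit bands $\xi,\eta$ whose arbitrary concatenations are again bands; Skowro\'nski's criterion then gives exponential growth with no curve counting at all. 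Finally, exponential growth is invariant under QP-mutation (extracted from the proof of \cite[Theorem 3.6]{GLFS13}), and flips of tagged triangulations correspond to mutations, so the statement propagates from the chosen triangulation to an arbitrary one. If you want to salvage your approach, you should replace the global string/band model of $\mathrm{mod}\,\cP$ by this quotient-plus-two-composable-bands argument; as written, the lower bound you need is not established.
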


Recall that for symmetric algebras there is a relation between the Auslander-Reiten translation $\tau$ and the Heller translate or syzygy $\Omega$, namely $\tau \cong  \Omega^2$ (see \cite[Section 2.5]{Ga80}), then  if $\Lambda_{\TT}=\cP(Q(\TT), W(\TT))$ is a Jacobian algebra as in the main Theorem, we have that the non-projective indecomposable $\Lambda_{\TT}$-modules are $\Omega$-periodic. Related to symmetric tame algebras, Erdmann and Skowro\'nski have announced  the following statement (see \cite[Theorem 6.2]{ES08} or \cite{ES06}): {\em a non-simple indecomposable symmetric algebra $\Lambda$ is tame with $\Omega$-periodic modules if and only if $\Lambda$ belongs to one of the following classes of algebras: a representation-finite symmetric algebra, a non domestic symmetric algebra of polynomial growth, or an algebra of quaternion type (in the sense of \cite{Erd90})}. We show at the end of this work that the main Theorem implies that the existing characterization has at least a missing class.

As another consequence of the Theorem we have the following result.

\begin{corollary}
Let $S$ be a closed oriented surface with a non-empty finite collection $M$ of 
punctures, excluding only the case of a sphere with $4$ (or less) punctures.The Auslander-Reiten quiver of the generalized cluster category
$\cC_{(S,M)}$ consists only of stable tubes of rank $1$ or $2$.
\end{corollary}

\begin{notation}
Let $(S,M)$ be a marked surface with empty boundary and $\TT$
be an tagged triangulation of $(S, M)$. We construct  the
unreduced signed adjacency quiver $\widehat{Q}(\TT)$ of the
triangulation $\TT$ and following \cite{LF09} we construct the
unreduced potential $\widehat{W}(\TT)$.  The quiver with potential
$(Q(\TT), W(\TT))$ associated to the triangulation $\TT$ of the
marked surface $(S, M)$ is the reduced part of
$(\widehat{Q}(\TT), \widehat{W}(\TT))$.

Let $\mathcal C$ be the generalized cluster category of $(\mathcal
S, M)$. We denote by $\Sigma$ the suspension functor of $\mathcal
C$ and by $\Sigma_n$ the suspension functor in a $n$-angulated
category $(\mathcal F, \Sigma_n,\pentagon)$ (cf. \cite{GKO13} for
definition of $n$-angulated categories).
\end{notation}

\section{Proof of the results}

We show first that the stable Auslander-Reiten
quiver  of a Jacobian algebra $\cP(Q(\TT),W(\TT))$  of an arbitrary tagged triangulation $\TT$  of a closed oriented surface $S$ consists only of stable tubes of rank $1$ or $2$ (see \cite[Chapter VII]{ARS97} for details of Auslander-Reiten quiver). In order to prove it, we
establish two preliminary results for 2-Calabi-Yau tilted symmetric algebras. Recall that any finite dimensional Jacobian algebra is a 2-Calabi-Yau tilted algebra (see \cite[Corollary 3.6]{Ami09}).

\begin{proposition}\label{lema}
Let $\mathcal C$ be a Hom-finite 2-Calabi-Yau triangulated category and $T\in \cC$ be a cluster-tilting object. If $\End_\cC(T)$ is symmetric, then its stable Auslander-Reiten
quiver  consists only of stable tubes of rank $1$ or $2$.
\end{proposition}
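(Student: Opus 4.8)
The plan is to exploit the interplay between the triangulated structure of $\mathcal C$, the $2$-Calabi--Yau property, and the hypothesis that $A := \End_{\mathcal C}(T)$ is symmetric. First I would recall the standard fact (Koenig--Zhu, Keller--Reiten) that the functor $F = \operatorname{Hom}_{\mathcal C}(T,-)\colon \mathcal C \to \operatorname{mod} A$ induces an equivalence $\mathcal C / (\Sigma T) \simeq \operatorname{mod} A$, and, more importantly, that the stable module category $\underline{\operatorname{mod}}\, A$ is triangle equivalent to the triangulated quotient $\mathcal C / (\operatorname{add} T)$, with $\Sigma$ inducing a functor on the quotient. Under this equivalence the syzygy $\Omega$ of $\underline{\operatorname{mod}}\, A$ corresponds to the inverse suspension (shift by $\Sigma^{-1}$) in the quotient category $\mathcal C/(\operatorname{add} T)$. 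The key numerical input is that in a $2$-Calabi--Yau category the Auslander--Reiten translation satisfies $\tau \cong \Sigma$, hence $\tau^{-1} \cong \Sigma^{-1} \cong \Sigma$ on the relevant quotient; this is where the "$2$" enters.

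Next I would combine this with the symmetry of $A$: for a symmetric algebra the Nakayama functor is the identity, so the Auslander--Reiten translation of $\underline{\operatorname{mod}}\, A$ is $\tau \cong \Omega^2$, and simultaneously $\Omega \cong \Sigma^{-1}$ on the quotient side. Putting these together, on objects of $\underline{\operatorname{mod}}\, A$ we get $\tau \cong \Omega^{2} \cong \Sigma^{-2} \cong \tau^{-2}$, and since $2$-Calabi--Yau gives $\tau \cong \Sigma$, one obtains $\tau^{2} \cong \Sigma^{2} \cong \operatorname{id}$ on objects, i.e. $\tau$ acts $2$-periodically on indecomposable non-projective $A$-modules. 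This is the crux of the argument; once it is in place the conclusion about the shape of the stable Auslander--Reiten quiver is essentially formal.

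Finally I would invoke the structure theory of stable Auslander--Reiten components. Since $A$ is symmetric and (being a finite-dimensional Jacobian algebra of a surface, hence tame) not of finite representation type in the generic situation, its stable AR quiver is a union of connected components each of which is a stable tube, a component of the form $\mathbb{Z}\Delta/G$ for a Dynkin or Euclidean diagram $\Delta$, or of type $\mathbb{Z}A_\infty$, $\mathbb{Z}A_\infty^\infty$, $\mathbb{Z}D_\infty$ (Riedtmann / Happel--Preiser--Ringel, and the Crawley-Boevey analysis for tame algebras). The $2$-periodicity of $\tau$ on every object of a component $\Gamma$ forces the tree class of $\Gamma$ to be $A_1$ or $A_2$ and the component to be $\tau$-periodic; the only such components are stable tubes of rank $1$ (when $\tau = \operatorname{id}$) or rank $2$. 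Hence the stable Auslander--Reiten quiver consists only of stable tubes of rank $1$ or $2$, as claimed.

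The main obstacle I anticipate is making precise the identification of $\Omega$ on $\underline{\operatorname{mod}}\, A$ with a shift in the $2$-Calabi--Yau quotient category, taking proper care of the direction of the shift and of what happens on the summands lying in $\operatorname{add} T$ (which become zero in the quotient but are the projective-injective $A$-modules); one must check the periodicity statement is about the actual action on iso-classes of modules and not merely up to the subcategory being killed. A secondary point requiring care is ruling out, for these specific tame symmetric algebras, the exotic possibility of non-periodic components sneaking in — but the periodicity of $\tau$ on objects, established above, closes that gap directly.
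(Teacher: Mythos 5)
The overall strategy (prove $\tau^{2}\cong\operatorname{id}$ on $\underline{\operatorname{mod}}\,A$ and then read off that every stable component is a tube of rank $1$ or $2$) is the right one and matches the paper, but the crux of your argument contains a genuine gap. You identify the syzygy $\Omega$ of $\underline{\operatorname{mod}}\,A$ with $\Sigma^{-1}$ transported through the quotient of $\mathcal C$. This identification is both unjustified and false: first, $\mathcal C/(T,\Sigma T)\simeq\underline{\operatorname{mod}}\,A$ is an additive quotient, not a Verdier quotient, and $\Sigma$ does not even descend to it unless one knows that $\operatorname{add}T$ is stable under the Serre functor $\Sigma^{2}$ (this follows from the symmetry of $\operatorname{End}_{\mathcal C}(T)$ by a lemma of Ringel, a step you omit); second, the functor on $\underline{\operatorname{mod}}\,A$ corresponding to $\Sigma=\tau_{\mathcal C}$ is the Auslander--Reiten translate $\tau_{A}=\Omega^{2}$, not $\Omega^{-1}$. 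Your own chain betrays the inconsistency: from $\tau\cong\Omega^{2}\cong\Sigma^{-2}\cong\tau^{-2}$ one gets $\tau^{3}\cong\operatorname{id}$, not the claimed $\tau^{2}\cong\operatorname{id}$, and the final appeal to ``$2$-Calabi--Yau gives $\tau\cong\Sigma$, hence $\tau^{2}\cong\Sigma^{2}\cong\operatorname{id}$'' applies the $2$-CY property of $\mathcal C$ as if it held for $\underline{\operatorname{mod}}\,A$ and as if $\Sigma^{2}$ (the Serre functor of $\mathcal C$) were the identity, which it is not.

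The missing ingredient is that for a $2$-CY-tilted algebra $A$ the stable category $\underline{\operatorname{mod}}\,A$ is $3$-Calabi--Yau with suspension $\Omega^{-1}$ (Keller--Reiten; the paper obtains this via the $4$-angulated structure on $\operatorname{add}T$ from Geiss--Keller--Oppermann). Reiten--Van den Bergh then gives $\Omega^{-1}\tau_{A}=\Omega^{-3}$, i.e.\ $\tau_{A}=\Omega^{-2}$, while symmetry of $A$ gives $\tau_{A}=\Omega^{2}$; hence $\Omega^{4}\cong\operatorname{id}$ and $\tau_{A}^{2}\cong\operatorname{id}$. Your concluding paragraph on the shape of $\tau$-periodic components is fine (and more explicit than the paper), but note that the proposition concerns an arbitrary Hom-finite $2$-CY triangulated category, so you should not import tameness of surface Jacobian algebras at that point.
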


\begin{proof}
By \cite[Theorem I 2.4]{RVB02} there is a relation between the Serre functor $\mathbb S$, the Auslander-Reiten translation $\tau_\cC$ and the suspension functor $\Sigma$ of $\cC$ on objects, that is,  $\mathbb S=\Sigma\tau_\cC$. Since $\cC$ is 2-Calabi-Yau, we have $\Sigma\tau=\Sigma^2$, therefore
$\tau_\cC=\Sigma$.
Denote by $\Lambda$ the algebra $\End_\cC(T)$ and by $\tau_\Lambda$ the Auslander-Reiten translation of $\mod \Lambda$.
By hypothesis,  $\Lambda$ is a symmetric algebra, then by \cite[Lema]{Ri08} the subcategory add($T$) is closed under the Serre functor $\tau_\cC^2=\mathbb S=\Sigma^2$.
Then by \cite[Remark 6.3]{GKO13}, the stable category
\underline{mod}($\Lambda$) is a 3-Calabi-Yau category. Hence by
\cite[Theorem I 2.4]{RVB02} we have
$\Omega^{-1}\tau_{\Lambda}=\Omega^{-3}$, because $\Omega^{-1}$ is
the suspension functor in \underline{mod}$\Lambda$ , therefore
$\Omega^{-2}=\tau_{\Lambda}$.

On the other hand, $\Lambda$ is symmetric, then
$\Omega^2=\tau_{\Lambda}$, therefore $\Omega^4=\mathds 1_{
\textrm{\underline{mod}}\Lambda}$. Then
$\Omega^{4}=\tau_{\Lambda}^2=\mathds 1_{
\textrm{\underline{mod}}\Lambda}$.
\end{proof}

\begin{rem} Note that in proof of Proposition \ref{lema}, we also show that symmetric 2-Calabi-Yau tilted algebras are algebras whose
indecomposable non-projective modules are $\Omega$-periodic and the period is a divisor of $4$.
\end{rem}

\begin{proposition}\label{lema2}
Let $\mathcal C$ be a Hom-finite 2-Calabi-Yau triangulated category. If there exists a cluster-tilting object $T\in \cC$ such that $\End_\cC(T)$ is symmetric,
then the Auslander-Reiten quiver of the category
$\cC$ consists only of stable tubes of rank $1$ or $2$
\end{proposition}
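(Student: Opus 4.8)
The plan is to deduce the statement from Proposition~\ref{lema} by showing that the Auslander-Reiten translate of $\cC$ itself is 2-periodic on objects, and then invoking the structure theory of Auslander-Reiten components consisting of periodic objects. Write $\Lambda=\End_\cC(T)$ and let $F=\operatorname{Hom}_\cC(T,-)\colon\cC\to\mod \Lambda$ be the canonical functor. First I would recall the well known fact (Buan--Marsh--Reiten, Keller--Reiten) that $F$ is full and dense and induces an equivalence $\cC/(\Sigma T)\xrightarrow{\ \sim\ }\mod \Lambda$; in particular $F$ restricts to a bijection between the indecomposables of $\cC$ that are not summands of $\Sigma T$ and the indecomposable $\Lambda$-modules, sending the indecomposable summands $T_1,\dots,T_n$ of $T$ to the indecomposable projectives, sending every other indecomposable of $\cC$ to an indecomposable non-projective module, and reflecting isomorphisms between indecomposables that are not summands of $\Sigma T$. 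From the proof of Proposition~\ref{lema} I may use that $\tau_\cC=\Sigma$ on objects, that $\Omega^{4}=\mathds 1$ on $\textrm{\underline{mod}}\,\Lambda$, and that $\add T$ is closed under $\mathbb S=\Sigma^{2}$; this closure also forces $\operatorname{Ext}^{-1}_\cC(T,T)=0$. Since $\Lambda$ is moreover symmetric, its Nakayama permutation is trivial, and since $\mathbb S|_{\add T}$ corresponds under $F$ to the Nakayama functor of $\Lambda$ (by \cite{Ri08} and Serre duality), we get $\Sigma^{2}T_i\cong T_i$ for all $i$, hence $\Sigma^{2}X\cong X$ for every indecomposable $X\in\add T\cup\add\Sigma T$.

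The key step is to prove $\Sigma^{2}X\cong X$ for an indecomposable $X$ lying in neither $\add T$ nor $\add\Sigma T$. For such an $X$ I would pick a triangle $T^{1}\to T^{0}\to X\to\Sigma T^{1}$ with $T^{0},T^{1}\in\add T$ (these exist since $T$ is cluster tilting in a 2-Calabi-Yau category), apply $F$, and use $\operatorname{Ext}^{1}_\cC(T,T^{1})=0$ together with $\operatorname{Ext}^{-1}_\cC(T,T^{0})=0$ to collapse the long exact sequence to $0\to F(\Sigma^{-1}X)\to F(T^{1})\to F(T^{0})\to F(X)\to 0$, an exact sequence of $\Lambda$-modules with $F(T^{0})$ and $F(T^{1})$ projective. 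Reading off syzygies from this projective presentation gives $F(\Sigma^{-1}X)\cong\Omega^{2}F(X)$ in $\textrm{\underline{mod}}\,\Lambda$ (the module $F(\Sigma^{-1}X)$ being indecomposable non-projective). Since $\Sigma^{-1}X$ is again indecomposable and lies in neither $\add T$ nor $\add\Sigma T$ (because $\add T$ is stable under $\Sigma^{\pm2}$), the same applies to it; iterating once more gives $F(\Sigma^{-2}X)\cong\Omega^{4}F(X)\cong F(X)$ in $\textrm{\underline{mod}}\,\Lambda$, hence in $\mod \Lambda$ since both sides are indecomposable non-projective. As $F$ reflects isomorphisms between indecomposables outside $\add\Sigma T$, this yields $\Sigma^{-2}X\cong X$, that is $\Sigma^{2}X\cong X$.

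Combining the two cases with $\tau_\cC=\Sigma$ shows that every indecomposable object of $\cC$ is $\tau_\cC$-periodic of period dividing $2$. Since $\cC$ is a Hom-finite triangulated category with Auslander-Reiten triangles, its Auslander-Reiten quiver is a stable translation quiver, and by the structure theory of components all of whose vertices are $\tau$-periodic of bounded period --- the same input used in the proof of Proposition~\ref{lema}, applied now to $\cC$, or transported from $\mod \Lambda$ through $F$ --- each connected component is a stable tube $\mathbb Z A_\infty/\langle\tau^{r}\rangle$, and the period bound forces $r\in\{1,2\}$. (As usual one discards the degenerate possibility of a finite component, which does not occur in the setting of the main theorem.)

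The step I expect to be the main obstacle is the middle one, establishing $F(\Sigma^{-1}X)\cong\Omega^{2}F(X)$ cleanly: it requires care with the shapes of the $\add T$-approximations and with spurious projective summands, and it rests on the vanishing of $\operatorname{Ext}^{-1}_\cC(T,T)$, which is precisely where the symmetry of $\Lambda$ --- via \cite{Ri08} --- genuinely enters. A lesser difficulty is selecting the right reference for the ``periodic component $\Rightarrow$ stable tube'' statement in the triangulated setting.
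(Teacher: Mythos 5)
Your proposal is correct and follows the same overall strategy as the paper: first show that $\Sigma^{2}$ (equivalently $\tau_\cC^{2}$) is the identity on the objects of $\add T$ using the symmetry of $\Lambda$, then handle the remaining indecomposables by transporting the $\Omega^{4}=\mathds 1$ statement of Proposition~\ref{lema} through the Keller--Reiten equivalence $\cC/\add(\Sigma T)\simeq\mod\Lambda$. The only real divergence is in the tools for the first step: the paper invokes the $4$-angulated structure on $\add T$ from Geiss--Keller--Oppermann, where the suspension $\Sigma_4$ is simultaneously $\Sigma^{2}$ and the Nakayama functor $\nu=\mathds 1$, whereas you reach the same conclusion $\Sigma^{2}T_i\cong T_i$ directly from the triviality of the Nakayama permutation of a symmetric algebra together with Rickard's lemma identifying $\mathbb S|_{\add T}$ with $\nu$. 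Your version also makes explicit the computation $F(\Sigma^{-1}X)\cong\Omega^{2}F(X)$ (using $\operatorname{Ext}^{1}_\cC(T,T)=0$ and $\operatorname{Ext}^{-1}_\cC(T,T)=0$, the latter from $\Sigma^{2}T\in\add T$) that the paper leaves implicit in its citation of \cite{KR07}; this buys a more self-contained argument at the cost of the bookkeeping with non-minimal presentations that you correctly flag. Both write-ups are equally terse on the final ``$\tau$-periodic of period dividing $2$ implies the components are stable tubes of rank $1$ or $2$'' step, which rests on the standard structure theory of stable translation quivers with periodic translation.
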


\begin{proof}

Let $T$ be a cluster-tilting object in $\mathcal C$ such that the 2-Calabi-Yau tilted algebra $\Lambda=\End_{\mathcal C}(T)$ is symmetric, therefore by \cite[Proposition 6.4]{GKO13}
proj$(\Lambda)=\add(T)$ is a $4$-angulated category, with the Nakayama functor
$\nu$ as suspension. It well known that the Nakayama functor of a
symmetric algebra is the identity (see \cite[Lemma I.3.5]{Erd90}).

By \cite[Remark 6.3]{GKO13}, then the suspension in the
$4$-angulated category  $\add(T)$ satisfies  $\Sigma_4=\Sigma^2$, but
also satisfies $\Sigma_4=\nu=\mathds 1_{\add(T)}$,
then $\Sigma^2=\mathds 1_{\add (T)}$. Hence
$\tau^2=\Sigma^2=\mathds 1_{\textrm{add}(T)}$.

The result follows for Proposition \ref{lema} and the equivalence
$\mathcal C/(\textrm{add}(\Sigma
T))\cong\textrm{mod}\Lambda$ proved in
\cite[Proposition, Section 2.1]{KR07}, which is induced by the functor  $F:\mathcal C \longrightarrow \textrm{mod} \Lambda$ which sends $X\in\mathcal C$ to the module $T \mapsto \textrm{Hom}(T, X)$.
\end{proof}

As consequence of Proposition \ref{lema2}, we have a result about the Auslader-Reiten quiver of the generalized cluster category of a closed surface with punctures.

\begin{coroll}\label{cor}
Let $S$ be a closed oriented surface with a non-empty finite collection $M$ of
punctures, excluding only the case of a sphere with $4$ (or less) punctures.The Auslander-Reiten quiver of the generalized cluster category
$\cC_{(S,M)}$ consists only of stable tubes of rank $1$ or $2$.
\end{coroll}

\begin{proof}
It was proved in \cite[Proposition 4.7]{Lad12} that there is a particular
triangulation $\mathbb T$ of $(\mathcal S, M)$ such that the
Jacobian algebra $\Lambda=\mathcal P(Q(\TT),W(\TT))$ is symmetric.

By \cite[Proposition 4.10]{FST08}, \cite[Theorem 7.1]{LF12} and
\cite[Theorem 3.2]{KY11}, the generalized cluster category $\mathcal
C$  is independent of the choice of the triangulation $\mathbb T$,
then the generalized cluster category $\mathcal C$ is equivalent to
the generalized cluster category $\mathcal C_{(Q(\TT),W(\TT))}$. The
result follows from Proposition \ref{lema2}.
\end{proof}

\begin{rem}\label{simetrica}
A partial converse to Proposition \ref{lema2} is given in \cite[Lemma 2.2. (c)]{BIKR08}. Moreover, this result and Corollary \ref{cor} imply that any Jacobian algebra of a tagged triangulation of a closed Riemann surface is not only weakly-symmetric, but is symmetric.
\end{rem}

Now, we prove the first part of Theorem, that is, we prove that for an arbitrary tagged triangulation $\mathbb T$, the Auslander Reiten quiver of the Jacobian algebra $\cP(Q(\TT),W(\TT))$ consists only of stable tubes of rank 1 or 2.

\begin{proof}[Proof of periodic module category]
Let $\TT$ be a triangulation of the marked surface $(\mathcal S,
M)$ and $\Lambda=\mathcal P(Q(\TT), W(\TT))$ be the Jacobian Algebra
of the triangulation $\TT$. It was already known that $\Lambda$ is a
tame algebra (see \cite{GLFS13}), and by Remark \ref{simetrica} $\Lambda$ is symmetric.  Therefore, it
only remains to prove that its stable Auslander-Reiten quiver
consists only of stable tubes of rank $1$ or $2$.  Consider a
cluster tilting object $T$ of $\mathcal C$ such that the functor  $F:\mathcal C \longrightarrow \textrm{mod} \Lambda$ which sends $X\in\mathcal C$ to the module $T \mapsto \textrm{Hom}(T, X)$ induces an equivalence $\mathcal
C/(\textrm{add}(\Sigma T))\cong\textrm{mod}\Lambda$ (see \cite[Proposition, Section 2.1]{KR07}). The functor $F$ also induces an equivalence
\underline{mod}$\Lambda\cong\mathcal C/(T, \Sigma T)$ (see \cite[Section 3.5]{KR07}). Therefore the
statement follows from Corollary \ref{cor}.
\end{proof}

Finally, before we prove the second part of the main result,  we recall the
definition of a tame algebra of exponential growth. Recall also that for the following definition the ground field $k$ is assumed to be algebraically closed.

Following Drozd \cite{D80}, an algebra $A$ is \textit{tame} if for every dimension $d\in
\mathbb N$ there is a finite number of $A-k[X]-$bimodules $N_1,
\dots, N_{i(d)}$ such that each $N_i$ is finitely generated free
over the polynomial ring $k[X]$ and almost all $d$-dimensional indecomposable
$A$-modules are isomorphic to $N_i\otimes_{k[X]}k[X]/ (X-\lambda)$
for some $i\in\{1,\dots, i(d)\}$ and some $\lambda\in k$. For a tame algebra $A$, we denote by $\mu_A(d)$ the smallest possible number of these $N_i$. Then $A$ is said to be of \emph{polynomial growth} \cite{Sk87} (respectively, \emph{domestic} \cite{CB91}) if there is a positive integer $m$ such that $\mu_A(d)\leq d^m$ (respectively, $\mu_A(d)\leq m$) for all $d\geq 1$. We say that $A$ is tame of \textit{exponential growth} if $\mu_A(d)> r^d$ for
infinitely many $d\in \mathbb N$ and some real number $r>1$.

Also, we recall the definition of string and band in string algebras. Given an arrow $\alpha:i \to j$ in a quiver $Q$, we denote by $\alpha^{-1}:j\to i$ the formal inverse of $\alpha$. Given such a formal inverse $l=\alpha^{-1}$, one writes $l^{-1}=\alpha$. Let $\bar{Q_1}$ be the set of all arrows and their formal inverses, the elements of $\bar{Q_1}$ are letters. A \emph{string} $w$, of an algebra $kQ/I$, is a sequence $l_1l_2\cdots l_n$ of the elements of $\bar{Q_1}$ such that

\begin{itemize}
\item[(W1)] We have $l_i^{-1}\neq l_{i+1}$, for all $1\leq i< n$.
\item[(W2)] No proper subsequence of $w$ or its inverse belongs to $I$.
\item[(W3)] end($l_i$) = start($l_{i+1}$) for all $i$.
\end{itemize}

If $w=l_1l_2\cdots l_n$ and $w'=l'_1l'_2\cdots l'_m$ are strings we say that the composition of $w$ and $w'$ is defined provide $l_1l_2\cdots l_nl'_1l'_2\cdots l'_m$ is a string, and write $ww'=l_1l_2\cdots l_nl'_1l'_2\cdots l'_m$.

A string $w$ is said to be \emph{cyclic} if all powers $w^m$, $m\in\mathbb N$ are strings. Given a cyclic string $w$, the powers $w^m$ with $m\geq 2$ are said to be \emph{proper powers}. A cyclic string $w$ is said to be \emph{primitive} provided it is not a proper power of some other string. A \emph{band} is a cyclic primitive string. 

\begin{rem}\label{growth}
From the proof of the \cite[Theorem 3.6]{GLFS13} follows that mutations of quivers with potential also preserve exponential growth, therefore  it is enough to show a particular triangulation of each closed surface which induces a  Jacobian algebra of exponential growth.
\end{rem}

\begin{lemma}\label{exponential}
Let $A$ be a finite dimensional algebra and $A'$ be a quotient of $A$. If $A'$ is of exponential growth, then  so is $A$.
\end{lemma}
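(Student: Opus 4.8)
The plan is to prove that exponential growth passes from a quotient $A'=A/I$ up to $A$. The key idea is that every $A'$-module is an $A$-module via the quotient map $\pi\colon A\to A'$, and this inclusion of module categories interacts well with the parametrizing bimodules appearing in the definition of tameness. So I would argue that a bound on $\mu_A(d)$ forces a bound on $\mu_{A'}(d)$, and then contrapose.

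First I would fix a dimension $d$ and consider the families $N_1,\dots,N_{\mu_A(d)}$ of $A$-$k[X]$-bimodules witnessing tameness of $A$ in dimension $d$. Restriction of scalars along $\pi$ is exact and preserves dimension, so each $N_i$ becomes an $A'$-$k[X]$-bimodule precisely on the (open) subset of parameters $\lambda$ for which $N_i\otimes_{k[X]}k[X]/(X-\lambda)$ is annihilated by $I$; more robustly, one replaces $N_i$ by $N_i\otimes_A A'$ (or the largest quotient that is free over $k[X]$, after discarding finitely many parameters), still free of the same rank over $k[X]$. Since almost every $d$-dimensional $A'$-module is, viewed as an $A$-module, a specialization of one of the $N_i$, and since that $A$-module structure already factors through $A'$, the specialization $N_i\otimes_{k[X]}k[X]/(X-\lambda)$ must itself be an $A'$-module for those cofinitely many $\lambda$. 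Hence the (modified) list of at most $\mu_A(d)$ bimodules parametrizes almost all $d$-dimensional $A'$-modules, giving $\mu_{A'}(d)\le \mu_A(d)$ for every $d$.

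With the inequality $\mu_{A'}(d)\le\mu_A(d)$ in hand, the conclusion is immediate: if $A'$ is of exponential growth, there is $r>1$ with $\mu_{A'}(d)>r^d$ for infinitely many $d$, whence $\mu_A(d)\ge\mu_{A'}(d)>r^d$ for those same $d$, so $A$ is of exponential growth. (One should also note $A$ is tame, but this is hypothesized, and in any case a quotient of a tame algebra being tame is classical, while here we go the other direction and tameness of $A$ is given.)

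The main obstacle is the bookkeeping in the first step: making precise that the finitely many parametrized families for $A$ can be replaced by families that are genuinely $A'$-$k[X]$-bimodules, free over $k[X]$, without increasing their number. The cleanest way is to work ``up to finitely many parameters'': the locus where $N_i\otimes_{k[X]}k[X]/(X-\lambda)$ fails to be an $A'$-module, i.e. where some element of a finite generating set of $I$ acts nontrivially, is a proper closed subset of the affine line (or all of it, in which case we simply drop $N_i$), hence finite; discarding these finitely many $\lambda$ affects neither the ``almost all'' clause nor the asymptotics of $\mu$. I would spell this out carefully, and otherwise the argument is routine.
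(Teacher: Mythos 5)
Your argument is essentially the paper's: every $A'$-module is an $A$-module via $\pi$, an isomorphism of $A$-$k[X]$-bimodules between modules on which $I$ acts trivially is automatically an isomorphism of $A'$-$k[X]$-bimodules, hence $\mu_{A'}(d)\le\mu_A(d)$, and exponential growth passes up by contraposition. One small correction to your bookkeeping: the locus of $\lambda$ where some generator of $I$ acts nontrivially on $N_i\otimes_{k[X]}k[X]/(X-\lambda)$ is \emph{open}, not closed (its complement is the common vanishing locus of the polynomial entries of the matrices by which generators of $I$ act on the free $k[X]$-module $N_i$), so it is either empty --- in which case those polynomials vanish identically, $IN_i=0$, and $N_i$ is genuinely an $A'$-$k[X]$-bimodule --- or cofinite, in which case you drop $N_i$ outright and absorb its finitely many $A'$-module specializations into the ``almost all''; either way the number of families does not increase and your conclusion stands.
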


The proof follows from the fact that any  $A'$-module is also an
$A$-module and if $L\cong N$ as  $A-k[x]-$bimodules, then $L\cong N$
as $A'-k[x]-$bimodules. Therefore $\mu_{A'}(d)\leq \mu_A(d)$.

Our goal is to find a triangulation $\TT$ for each closed surface with marked points $(S, M)$ such that there is a quotient of the Jacobian algebra $\cP(Q(\TT),W(\TT))$ which is an algebra of exponential growth. We have to distinguish in
two cases: the sphere with 5 punctures and the other closed
surfaces.

\begin{lemma}
For an arbitrary triangulation $\TT$ of a sphere with 5 punctures, the
Jacobian algebra $\cP(Q(\TT),W(\TT))$ is an algebra of exponential
growth.
\end{lemma}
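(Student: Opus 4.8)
The plan is to combine Remark~\ref{growth} and Lemma~\ref{exponential}: it suffices to pick one convenient triangulation $\TT$ of the $5$-punctured sphere and then to produce an admissible ideal $I$ with $J(W(\TT))\subseteq I$ such that the quotient $A'=\cP(Q(\TT),W(\TT))/I$ is a string algebra of exponential growth. For $\TT$ I would take the boundary complex of the triangular bipyramid: the punctures are a ``north pole'' $N$, a ``south pole'' $S$ and an equatorial triangle $a,b,c$; the nine arcs are $Na,Nb,Nc,Sa,Sb,Sc,ab,bc,ca$; and the six triangles are $Nab,Nbc,Nca,Sab,Sbc,Sca$. No two arcs are sides of two common triangles, so $\widehat Q(\TT)$ has no $2$-cycles and already equals the reduced quiver $Q(\TT)$; it has $9$ vertices, each of in- and out-degree $2$, one oriented $3$-cycle per triangle, and $W(\TT)$ is the signed sum of these six $3$-cycles together with the five puncture cycles $\omega_p$ (of length $3$ at $N,S$ and length $4$ at $a,b,c$). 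The first step is thus to write $Q(\TT)$, $W(\TT)$ and the relations $\partial_\alpha W=0$ explicitly.

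Second, I would cut this algebra down to a string algebra. Each arrow lying in both a triangle $3$-cycle and a puncture cycle gives a ``commutativity'' Jacobian relation equating one path, up to sign, with a complementary path around that puncture; adding to $J(W)$ one monomial from each such relation (and, if necessary, a handful of further paths) produces an ideal $I$ for which $A'=\cP(Q(\TT),W(\TT))/I$ is special biserial with zero relations only, i.e.\ a string algebra: at most two arrows enter and leave each vertex, and for every arrow $\beta$ at most one arrow $\alpha$ has $\alpha\beta\notin I$ and at most one arrow $\gamma$ has $\beta\gamma\notin I$. One must check that $I$ is admissible and that $J(W)\subseteq I$, so that $A'$ really is a quotient; then, since a string algebra is tame, Lemma~\ref{exponential} reduces the problem to showing that $A'$ is of exponential growth.

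Third, to see that $A'$ has exponential growth I would exhibit two bands $b_1,b_2$ of $A'$ sharing a common vertex such that every reduced concatenation $b_{i_1}^{\pm 1}b_{i_2}^{\pm 1}\cdots b_{i_m}^{\pm 1}$ of cyclic rotations of the $b_i$ is again a (primitive) string of $A'$. Such bands are found by direct inspection of the small quiver $Q(\TT)/I$; geometrically they are the strings cut out by two closed curves on the $5$-punctured sphere whose free homotopy classes generate a rank-two free subgroup of $\pi_1(S\setminus M)\cong F_4$ (after passing to powers of the curves if needed). From such a pair one gets, for each $m$, at least $c^m$ (for some real $c>1$) pairwise non-equivalent primitive cyclic strings of length at most $\ell m$, where $\ell=\max(\abs{b_1},\abs{b_2})$; since each primitive cyclic string $w$ of length $n$ yields a one-parameter family of band modules $M(w,\lambda)$, $\lambda\in k^{\ast}$, of dimension $n$, and non-rotation-equivalent cyclic strings give non-isomorphic families, we obtain $\mu_{A'}(d)\ge c^{\lfloor d/\ell\rfloor}$ for infinitely many $d$. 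Hence $A'$ — and therefore $\cP(Q(\TT),W(\TT))$ by Lemma~\ref{exponential}, and the Jacobian algebra of \emph{every} triangulation of the $5$-punctured sphere by Remark~\ref{growth} — is of exponential growth.

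The main obstacle is the middle step: the ideal $I$ must be chosen so that $A'$ is at once a string algebra \emph{and} still carries two freely concatenating bands. The relations forced by $W(\TT)$, in particular those coming from the length-$4$ puncture cycles $\omega_a,\omega_b,\omega_c$, could a priori bound the lengths of all strings, so the choice of which monomials to add to $J(W)$ has to be made carefully and checked against conditions (W1)--(W3); once two suitable bands are in hand, verifying their primitivity and pairwise non-equivalence is routine.
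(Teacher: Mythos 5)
Your overall strategy is the same as the paper's: fix one triangulation of the $5$-punctured sphere, pass to a quotient of the Jacobian algebra that is a string-type algebra, exhibit two bands whose arbitrary concatenations are again bands, and then invoke Lemma~\ref{exponential} together with Remark~\ref{growth} (plus compatibility of flips with QP-mutation) to cover all triangulations. The reduction steps and the counting argument in your third paragraph are fine. Where you diverge is in the choice of triangulation and quotient: the paper deliberately uses a \emph{skewed-gentle} triangulation with self-folded triangles, kills the cyclic derivatives of a sub-potential $W'$, and lands in a skewed-gentle (clannish) algebra with special loops $\epsilon_1,\epsilon_2,\epsilon_3$, for which it can write the two bands $a_1a_2^{-1}a_3$ and $a_1b_2\epsilon_2^*c_2c_3^{-1}\epsilon_3^*b_3^{-1}$ explicitly; you instead take the bipyramid triangulation and aim for an ordinary string algebra.

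The genuine gap is that the entire content of such a proof is the explicit data you defer: the ideal $I$ and, above all, the two freely composable bands. This is not a routine omission here, because your choice of triangulation makes the target algebra extremely constrained. On the $5$-punctured sphere the sum of valencies is $2\cdot 9=18<4\cdot 5$, so at least two punctures have valency $3$; in the bipyramid these are $N$ and $S$, whose puncture cycles have length $3$. The cyclic derivatives then equate length-$2$ triangle paths with length-$2$ puncture paths, so once you add the triangle monomials $\alpha f(\alpha)$ to $J(W)$ the quotient $A'$ has \emph{all} paths of length $3$ equal to zero, the six arrows of $\omega_N$ and $\omega_S$ compose with nothing, and the only nonzero length-$2$ paths run along $\omega_a,\omega_b,\omega_c$. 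Bands do exist in this algebra, but the junction conditions at any fixed vertex are very tight: for instance, at the vertex $Na$ one checks that any two bands based there whose arbitrary concatenations are strings are forced to begin with the same arrow and end with the same inverse arrow, and whether a second, independent band satisfying these constraints actually exists is exactly the point that must be verified and that you have not verified (you yourself flag it as ``the main obstacle''). This is precisely why the paper does \emph{not} reuse its string-algebra argument (which it applies to all other closed surfaces via triangulations with all valencies $\geq 4$) for the $5$-punctured sphere, and instead switches to a skewed-gentle triangulation where the special loops give enough room to write down two explicitly freely composable bands. As it stands, your argument is a plausible plan whose decisive step is missing and is not obviously salvageable with the triangulation you chose.
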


\begin{proof}
Consider the skewed-gentle triangulation $\TT$ of Figure
\ref{esfera} (see definition of skewed-gentle triangulation in \cite[Section 6.7]{GLFS13}) and the Jacobian algebra $\Lambda=\mathcal{P}(Q(\TT),
W(\TT)$. The skewed-gentle triangulation $\TT$ was already studied in \cite{GLFS13}.

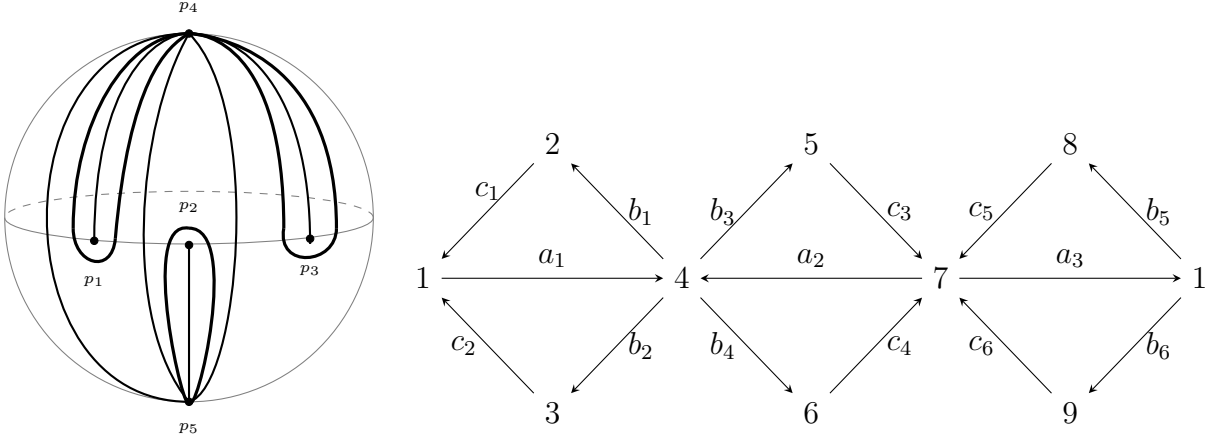
\begin{figure}[ht!]
\centering
    \subfloat{
    \begin{tikzpicture}[scale=0.7]
    \draw[gray] (3.5,3.5) circle (3.5cm);
%%%%Ecuador
    \draw[gray](0,3.5) arc (180:360: 3.5cm and 0.5cm);
    \draw[gray,thin,dashed] (7,3.5) arc (0:180: 3.5cm and 0.5cm);
%%%%%% arcos
    \draw[thick, black] (3.5,7) .. controls +(180:3.6cm) and +(180:3.6cm) .. (3.5,0);
    \draw[thick, black] (3.5,7) .. controls +(180:.65cm) and +(90:3cm) .. (1.7,3.06);
    \draw[thick, black] (3.5,7) .. controls +(0:1cm) and +(90:3cm) .. (5.8,3);
    \draw[thick, black] (3.5,3) .. controls +(-90:1cm) and +(90:2cm) .. (3.5,0);
 \draw[thick, black] (3.5,7) .. controls +(-120:2cm) and +(130:2cm) .. (3.5,0);
  \draw[thick, black] (3.5,7) .. controls +(-45:2cm) and +(15:1cm) .. (3.5,0);
%%%%loop derecho
   \draw[very thick, black] (3.5,7) .. controls +(0:.3cm) and +(90:3.5cm).. (5.3,3.3);
   \draw[very thick, black] (5.3,3.3) .. controls +(260:.8cm) and +(-90:.7cm).. (6.3,3.3);
  \draw[very thick, black] (6.3,3.3) .. controls +(90:3cm) and +(-5:.9cm).. (3.5,7);

 %%%%%%%aqui termina el loop derecho
    %\draw[very thick, black] (0,3.5) .. controls +(0:1.2cm) and +(90:2cm).. (2.2,-.2);
    %\draw[very thick, black] (2.2,-.2) .. controls +(270:.7cm) and +(-80:.7cm).. (1.2,-.4);
    %\draw[very thick, black] (1.2,-.4) .. controls +(85:3cm) and +(-10:.5cm).. (0,3.5);
    %\draw (1.8,-0.9) .. controls (2.2,0) and (2.7,3) ..(0,3.5);
    %\draw (0,3.5) .. controls (1.9,-1) and (1.2,-0.9) ..(1.8,-0.7);

%%%%loop izquierdo
    \draw[very thick, black] (3.5,7) .. controls +(180:1.2cm) and +(90:2cm).. (1.3,3.3);
    \draw[very thick, black] (1.3,3.3) .. controls +(270:.7cm) and +(-95:.7cm).. (2.1,3.1);
    \draw[very thick, black] (2.1,3.1) .. controls +(85:3cm) and +(-150:.5cm).. (3.5,7);
    %
%%%%loop central
   \draw[very thick, black] (3.5,0) .. controls +(-70:.4cm) and +(0:1cm).. (3.5,3.3);
   \draw[very thick, black] (3.5,0) .. controls +(-100:.4cm) and +(170:1cm).. (3.5,3.3);

%%%% punchaduras
    \filldraw [black] (3.5,7) circle (2pt)
                  (3.5,0) circle (2pt)
                  (1.7,3.06) circle(2pt)
                  (5.8,3.1) circle(2pt)
                  (3.5,2.98) circle(2pt);
\draw (3.5,7.5) node {\tiny$p_{4}$};
\draw (3.5,-.5) node {\tiny$p_{5}$};
\draw (1.7,2.3) node {\tiny$p_{1}$};
\draw (3.5,3.7) node {\tiny$p_{2}$};
\draw (5.8,2.5) node {\tiny$p_{3}$};
    \end{tikzpicture}
    }
  \subfloat{
  \begin{tikzpicture}
%%%%%%%Carcaj
 \matrix (m)[matrix of math nodes, row sep=3em,column sep=3em,ampersand replacement=\&]
{\& 2\& \& 5\& \& 8\&\&\\
1\&\ \& 4\&\ \&7\&\&1\\
\&3\&\ \&6\&\&9\&\\};
  \path[-stealth]
    (m-2-1) edge node[above]{$a_1$} (m-2-3)
 (m-1-2) edge node[above]{$c_1$} (m-2-1)
(m-2-3) edge node[right]{$b_1$}(m-1-2) edge node[left]{$b_3$}(m-1-4) edge node[left]{$b_4$}(m-3-4) edge node[right]{$b_2$}(m-3-2)
(m-3-2) edge node[left]{$c_2$}(m-2-1)
(m-3-4) edge node[right]{$c_4$}(m-2-5)
(m-2-5) edge node[above]{$a_2$}(m-2-3)
(m-2-5)edge node[above]{$a_3$}(m-2-7)
(m-1-6) edge node[left]{$c_5$}(m-2-5)
(m-2-7) edge node[right]{$b_6$} (m-3-6)
(m-1-4) edge node[right]{$c_3$}(m-2-5)
(m-2-7) edge node[right]{$b_5$}(m-1-6)
(m-3-6) edge node[left]{$c_6$}(m-2-5);
\end{tikzpicture}
}
\caption{Triangulation $\TT$ of a sphere with 5 punctures}
\label{esfera}
\end{figure}

Let $I$ be the ideal of $k\langle\langle Q(\TT)\rangle\rangle$ generated
by the set $\partial(W')$ of cyclic derivatives of $W'$, where
$W'=b_5c_5a_2b_1c_1+a_1b_4c_4a_3$. Then the quotient algebra
$\Lambda'=\Lambda/ I$ is isomorphic $k\langle Q' \rangle/J$ where
$Q'$ is the quiver in Figure \ref{gentle} and $J$ is the ideal in
$k\langle Q' \rangle$ generated  by $\epsilon_i^2-\epsilon_i$,
$a_ib_i$, $b_ic_i$ and $c_ia_i$ for $i=1,2,3$ and the set
$\{b_2\epsilon_2c_2a_3$, $\epsilon_2c_2a_3a_1$, $c_2a_3a_1b_2$, $a_1b_2\epsilon_2c_2$, $\epsilon_3c_3a_2b_1\epsilon_1c_1$, $c_3a_2b_1\epsilon_1c_1b_3$, $a_2b_1\epsilon_1c_1b_3\epsilon_3$, $b_1\epsilon_1c_1b_3\epsilon_3c_3$, $\epsilon_1c_1b_3\epsilon_3c_3a_2$, $c_1b_3\epsilon_3c_3a_2b_1$, $b_3\epsilon_3c_3a_2b_1\epsilon_1\}$.
Then the quotient $\Lambda/I$ is a skewed-gentle algebra (see definition of skewed-gentle algebras in \cite{GP99}).

\begin{figure}[ht!]
\centering
\begin{tikzpicture}
%%%%%%%Carcaj
 \matrix (m)[matrix of math nodes, row sep=3em,column sep=3em,ampersand replacement=\&]
{\& 4\& \& 5\& \& 6\&\&\\
1\&\ \& 2\&\ \&3\&\&1\\};
  \path[-stealth]
    (m-2-1) edge node[above]{$a_1$} (m-2-3)
 (m-1-2) edge node[above]{$c_1$} (m-2-1)
(m-2-3) edge node[right]{$b_1$}(m-1-2) edge node[left]{$b_2$}(m-1-4)
(m-2-5) edge node[above]{$a_2$}(m-2-3)edge node[above]{$a_3$}(m-2-7)
(m-1-6) edge node[left]{$c_3$}(m-2-5)
(m-1-4) edge node[right]{$c_2$}(m-2-5)
(m-2-7) edge node[right]{$b_3$}(m-1-6);
\path
 (m-1-2) edge [loop above] node {$\epsilon_1$} (m-1-2)
 (m-1-4) edge [loop above] node {$\epsilon_2$} (m-1-4)
 (m-1-6) edge [loop above] node {$\epsilon_3$} (m-1-6);
 \end{tikzpicture}
\caption{Skewed-gentle quiver $Q'$}
\label{gentle}
\end{figure}
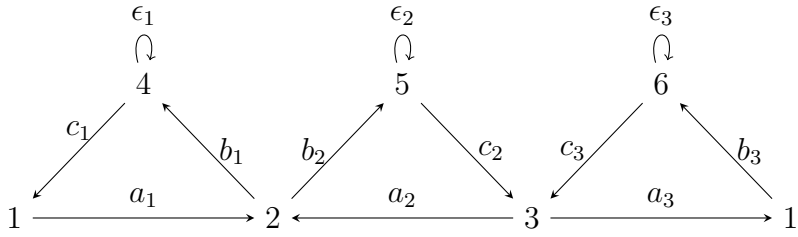

The indecomposable representations of skewed-gentle (or more generally clannish) algebras are described by a combinatorial rule in terms of certain words; similar to the more widely known special biserial algebras, see \cite{CB89} for more details. Thus in our situation it is enough to show that the corresponding clan $C=(k, Q',
S_p,(q_b), \leq)$ (see \cite[Definition 1.1]{CB89}) admits two
bands such that any arbitrary sequence of composition of them is again a band.

Let $C=(k, Q', S_p,(q_b\mid b\in S_p), \leq)$ be the clan of the
algebra $\Lambda'$, where the special loops $S_p$ are $\epsilon_1,
\epsilon_2$ and $\epsilon_3$ and with the following relations:
$a_i<b_i^{-1}$, $b_i<c_i^{-1}$ and $c_i<a_i^{-1}$ for $i=1,2,3$.

Recall, a word in a clan $C$ is a formal sequence $w_1w_2 \dots w_n$
of letters with $n>0$ such that for each $1\leq i<n$ we have end$(w_i)=\text{start}(w_{i+1})$ and the letters $w_i^{-1}$ and $w_{i+1}$ are
incomparable.

Now, consider the bands $\alpha=a_1a_2^{-1}a_3$ and
$\beta=a_1b_2\epsilon_2^*c_2c_3^{-1}\epsilon_3^*b_3^{-1}$. Observe
that $a_3^{-1}$ and $a_1$ are incomparable and also $b_3$ and $a_1$
are incomparable, then the products $\alpha\beta$ and $\beta\alpha$
are well defined and they are also bands, therefore $\Lambda'$ is an
algebra of exponential growth and by Lemma \ref{exponential} then
$\Lambda$ is so. One needs some number theory argument to prove that the string algebras or skewed-gentle algebras having two bands as before are of exponential growth, these number theory argument was given by Skowro\'nsky for a particular case in \cite[Lemma 1]{Sk87}. Finally, as we mention in Remark \ref{growth} mutations of
quiver with potential preserve exponential growth property and flips of tagged triangulations are compatible with mutations of quiver with potential (cf. \cite[Theorem 30]{LF09}), then any
Jacobian algebra associated to a triangulation of a sphere with 5
punctures is also of exponential growth.
\end{proof}

For an ideal triangulation $\TT$, the \textit{valency}
val$_{\TT}(p)$ of a puncture $p\in\mathbb P$ is the number of arcs
in $\TT$ incident to $p$, where each loop at $p$ is counted twice.

\begin{rem}\label{quivers}
Let $\TT$ be a triangulation with no self-folded triangles of a
marked surface with empty boundary $(S, \mathbb
M)$ and $(Q(\TT), W(\TT))$ be the quiver with potential associated to the triangulation $\TT$.

\begin{enumerate}
\item The quiver $Q(\TT)$ is a block-decomposable graph of blocks of type II, where a block of type II is a 3-cycle. See \cite{FST08} for details.

\item If every puncture $p\in M$ has valency at least 3, then any arrow $\alpha$ of $Q(\TT)$
has exactly two arrows $\beta,\gamma$ starting at the
terminal vertex of $\alpha$  and exactly one arrow $\delta$ ending
at the terminal vertex of $\alpha$. Following Ladkani in
\cite{Lad12} there are two functions $f, g:Q_1(\TT)\to Q_1(\TT)$  such that
$\alpha f(\alpha)f^2(\alpha)$ is a 3-cycle arising from a triangle in
$\TT$ and $(g^{n_\alpha-1}(\alpha))(g^{n_\alpha-2}(\alpha))\dots
(g(\alpha))(\alpha)$ is a cycle surrounding a puncture
$q$, where $n_\alpha=\min\{r>0 \mid g^r(\alpha)=\alpha\}$.

\item If every puncture has valency at least four, then there are no commutativity relations involving only paths of length two.
\end{enumerate}
\end{rem}

\begin{proof}[Proof of exponential growth property]

It was proven in \cite[Proposition 5.1]{Lad12}, that if the marked
surface (S,M) is not a sphere with 4 or 5 punctures, it has a
triangulation  $\TT$ with no self-folded triangles and in which each
puncture $p\in M$ has valency at least four.

Let $f, g: Q_1(\TT)\to Q_1(\TT)$ be functions as in Remark \ref{quivers} part
2). Let $I$ be the ideal in $A$ generated by the relations $\alpha
f(\alpha)$ for every arrow $\alpha$ in $Q_1(\TT)$ and consider the
quotient $A'=A/I$. It is clear that this quotient is a string
algebra.

To prove that $A'$ is an algebra of exponential growth we use the the argument as in the case of a sphere with 5 puncture, then it is enough
to prove that $A'$ admits two bands $\xi$ and $\eta$ such that any arbitrary combination of them is again a band.

Consider an arrow $\alpha:i\to j$ of the quiver $Q(\TT)$, we denote by
$\TT_{\alpha}$ the following piece of the triangulation $\TT$. The
vertices of $\alpha$ are two arcs of a triangle $\triangle_\alpha$
of $\TT$, and if $q$ and $p$ are the endpoints of the remaining arc
of this triangle, we let $\TT_{\alpha}$ the set of arcs of $\TT$
that have $q$ or $p$ as one of its endpoints. We observe that the
arc with endpoints $p$ and $q$ is a side of exactly two triangles,
one of them is the triangle $\triangle_\alpha$ and we denote by
$\triangle_\delta$ the other one. In Figure \ref{triangulation}, we
show the piece of triangulation $\TT_\alpha$.
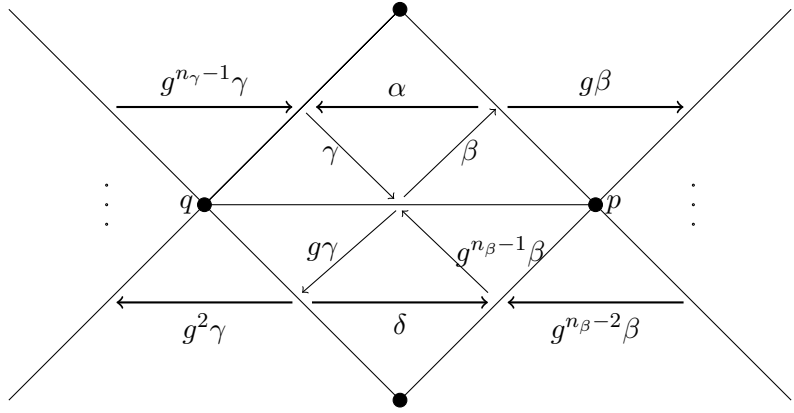
\begin{figure}[ht]
\centering
\begin{tikzpicture}[scale=2.6][
 every node/.style={align=center}]
\draw (0,0) -- (1,1) plot coordinates {(0,0) (1,1) (2,0) (0,0) (1,-1) (2,0)};
\draw(0,0)--(-1,1) plot coordinates{(0,0) (-1,-1)};
\draw(2,0)--(3,1) plot coordinates{(2,0) (3,-1)};
\draw[->](1.02,0.04)--node[right]{\small$\beta$}(1.49,0.49);
\draw[->, thick](1.4,0.5)-- node[above] {\small$\alpha$} (0.57,0.5);
\draw[->](0.52,0.47)--node[left]{\small$\gamma$}(0.97,0.03);
\draw[->,thick](0.55,-0.5)--node[below]{\small$\delta$}(1.45,-0.5);
\draw[->](1.45,-0.45)--node[right]{\small$g^{n_\beta-1}\beta$}(1.01,-0.03);
\draw[->](0.98,-0.03)--node[left]{\small$g\gamma$}(0.5,-0.45);
\foreach \x in {-0.1,0,0.1}
    \draw (-0.5,\x) circle (0.005);
\foreach \x in {-0.1,0,0.1}
    \draw (2.5,\x) circle (0.005);
\draw[->, thick](1.55,0.5)--node[above]{\small$g\beta$}(2.45,0.5);
\draw[->, thick](2.45,-0.5)--node[below]{\small$g^{n_\beta-2}\beta$}(1.55,-0.5);
\draw[->, thick](0.45,-0.5)--node[below]{\small$g^{2}\gamma$}(-0.45,-0.5);
\draw[->, thick](-0.45,0.5)--node[above]{\small$g^{n_\gamma-1}\gamma$}(0.45,0.5);
%puncture
\filldraw(2,0) circle(1pt) node[right]{\small $p$};
\filldraw(0,0) circle(1pt) node[left]{\small $q$};
\filldraw(1,1) circle(1pt);
\filldraw(1,-1) circle(1pt);
\end{tikzpicture}
\caption{The piece of triangulation $\TT_{\alpha}$}
\label{triangulation}
\end{figure}

Denote by $\alpha\gamma\beta$ the 3-cycle arising from the triangle
$\triangle_\alpha$. Surrounding the puncture $q$, there is a cycle
$w(q)$ starting at $j$ which is denoted by
$$(\gamma)(g\gamma)(g^2\gamma)(g^3\gamma)\cdots(g^{n_\gamma-1}\gamma),$$
and surrounding $p$ there is a cycle $w(p)$ starting at $i$
which is denoted by
$$(g\beta)(g^2\beta)\cdots(g^{n_{\beta}-2}\beta)(g^{n_\beta-1}\beta)(\beta).$$

We denote by
$\rho_1(\alpha)$ the word
$$(g^2\gamma)(g^3\gamma)\cdots(g^{n_\gamma-1}\gamma),$$  by
$\rho_2(\alpha)$ the word
$$(g\beta)(g^2\beta)\cdots(g^{n_{\beta}-2}\beta)$$ and by
$\delta$ the arrow $fg\gamma$, which is part of the 3-cycle arising
from the triangle $\triangle_\delta$.

Then denote by $\xi(\alpha)$  the word
$$(\alpha)(\rho_1(\alpha))^{-1}(\delta)(\rho_2(\alpha))^{-1}.$$ 
Observe that $\rho_1(\alpha)$ and $\rho_2(\alpha)$ are defined for any arrow $\alpha\in Q_1$, therefore we can define $\xi(\alpha)$ for any arrow $\alpha$. In particular, we also have the string $\xi(g\beta)$. We define $\eta= \xi(g\beta)^{-1}$

Observe that both strings $\xi(\alpha)$ and $\eta$ are actually bands, moreover, we have that the compositions $\xi(\alpha)\eta$ and $\eta\xi(\alpha)$ are well defined bands and so is any word formed by (arbitrary) compositions of  the bands $\xi(\alpha)$ and $\eta$. Therefore $A'$ is an algebra of exponential growth, and by Lemma \ref{exponential} it follows that  $A$ is an algebra of exponential growth.
The result follows from the fact that exponential growth is preserved by mutations (see Remark \ref{growth}).
\end{proof}

Finally, in order to show the main Theorem shows that the existing characterization of symmetric tame algebras whose non-projective indecomposable modules are $\Omega$-periodic  was not complete, we first recall some definitions.

\begin{dfn}
Let $\Lambda$ and $\Lambda'$ be self injective algebras.
\begin{itemize}
\item The algebras $\Lambda$ and $\Lambda'$ are said to be \emph{socle equivalent} if the factor algebras $\Lambda / \textrm{Soc}(\Lambda)$ and $\Lambda' / \textrm{Soc}(\Lambda')$ are isomorphic.

\item The algebra $\Lambda$ is said to be a \emph{self injective algebra of Dynkin type} $\Delta$ if $\Lambda$ is isomorphic to an orbit algebra $\hat{B}/ G$, where $\hat{B}$ is the repetitive algebra of a tilted algebra $B$ of Dynkin type $\Delta$ and $G$ is an admissible group of automorphisms of $\hat{B}$.

\item The algebra $\Lambda$ is said to be a \emph{self injective algebra of tubular type} if  $\Lambda$ is isomorphic to an orbit algebra $\hat{B}/ G$, where $\hat{B}$ is the repetitive algebra of a tubular algebra $B$ and $G$ is an admissible group of automorphisms of $\hat{B}$.

\item The algebra $\Lambda$ is said to be of \emph{quaternion type} if the following conditions are satisfied:
\begin{itemize}
\item $\Lambda$ is symmetric, indecomposable, tame of infinite representation type.
\item The indecomposable nonprojective finite dimensional $\Lambda$-modules are $\Omega$-periodic of
period dividing 4.
\item The Cartan matrix of $\Lambda$ is non-singular.
\end{itemize}
\end{itemize}

We say that an algebra $A$ is of \emph{pure quaternion type} if it is of quaternion type and not of polynomial growth
\end{dfn}

According to the characterization of  Erdmann and Skowro\'nski (see \cite{ES06} or \cite[Theorem 6.2]{ES08}) any symmetric tame algebras with $\Omega$-periodic modules is one of the following type of algebra:

\begin{enumerate}
\item[i)] socle equivalent to a symmetric algebra of Dynkin type (or equivalently a representation finite symmetric algebra);
\item[ii)] socle equivalent to a symmetric algebra of tubular type (or equivalently a non-domestic symmetric algebra of exponential growth);
\item[iii)] an algebra of pure quaternion type.
\end{enumerate}

In the following Remark we show that any Jacobian algebra arising from a closed surface with marked points, excluding only a sphere with 4 punctures, does not belong to none of one of the previous classes of algebras.

\begin{rem}

\begin{itemize}
\item [(1)] A combination of \cite[Theorem 7.1]{GLFS13}, \cite[Theorem]{Lad12}, \cite[Proposition 2.10]{FST08} and the Theorem of this work, yields that for each closed surface $S$ of genus $g$ with $p$ punctures, excluding only the case of a sphere with 4 punctures, and each tagged triangulation $\TT$, the Jacobian algebra $\cP(Q(\TT), W(\TT))$ is symmetric and tame with $\Omega$-periodic module category and  with $6(g-1)+ 3p$ simple modules. However, $\cP(Q(\TT), W(\TT))$ is neither of the following algebras:

\begin{itemize}
\item[a)] an algebra of blocks of quaternion type, because in the case of a torus with one puncture, it was shown by Ladkani in \cite{Lad12}, that the Cartan matrix of the associated Jacobian algebra is singular, therefore it is not blocks of quaternion type by definition. In the other cases, the module category of the Jacobian algebra has at least 6 simple modules, in contrast to algebras of quaternion type which have at most 3 simple modules (see \cite[Theorem]{Erd88}).
\item[b)] an algebra socle equivalent to an algebra of tubular type, because this algebra is an algebra of polynomial growth.
\item[c)] socle equivalent to an algebra of Dynkin type, algebras which are of finite representation type.

\end{itemize}

Therefore, the existing characterization of algebras which are symmetric, tame and with the non-projective indecomposable modules $\Omega$-periodic, was not complete (see \cite[Theorem 6.2]{ES08} or \cite{ES06}). These family of Jacobian algebras forms a new family with these properties.

\item [(2)] A similar statement to the theorem is known to hold for the sphere with $4$
punctures, except
that in this case the potentials depend also on the choice of a parameter
$\lambda\in k\setminus\{0,1\}$ and in this case the Jacobian algebras
$\cP(Q(\TT),W(\TT,\lambda))$ are (weakly) symmetric of tubular type
$(2,2,2,2)$ and this is of polynomial (lineal) growth, and the category $\cC_{(Q,M,\lambda)}$ is a tubular cluster category
of type $(2,2,2,2)$, see \cite{GeGo13} and \cite{BG09}.

\item [(3)] Similar result to Theorem and Corollary was
announced by Ladkani at the abstract of the Second ARTA conference, see \cite{Lad13}.

\item[(4)] The results of this work were presented at the Second and Third ARTA conference.
\end{itemize}
\end{rem}

\begin{acknowledgements}
I want to thank Christof Geiss for suggesting me n-angulated
categories for solving this problem, for suggesting me including the
exponential growth property of this kind of algebras and valuable
comments. I also want to thank Sonia Trepode for her constant
support.
The author was partially supported by a CONICET doctoral fellowship.
\end{acknowledgements}

\bibliographystyle{elsarticle-num}
\bibliography{refe}

\end{document}